\documentclass{article}

\usepackage{arxiv}

\usepackage[utf8]{inputenc} 
\usepackage[T1]{fontenc}    
\usepackage{hyperref}       
\usepackage{url}            
\usepackage{booktabs}       
\usepackage{amsfonts}       
\usepackage{nicefrac}       
\usepackage{microtype}      
\usepackage{lipsum}         
\usepackage{graphicx}
\usepackage{doi}

\usepackage{amssymb,amsthm,amsmath}
\usepackage{xcolor,paralist,hyperref,titlesec,fancyhdr,etoolbox}
\newtheorem{theorem}{Theorem}[]
\newtheorem{definition}{Definition}

\newtheorem{lemma}{Lemma}

\newtheorem*{remark}{Remark}

\title{Explicit construction of infinite families of strongly regular digraphs with parameters $((v+(2^{n+1}-4)t)2^{n-1}, k+(2^n-2)t, t, \lambda, t)$\thanks{Translated from the Russian original published in: V. A. Byzov, I. A. Pushkarev, “Explicit construction of infinite families of strongly regular digraphs with parameters $((v+(2^{n+1}-4)t)2^{n-1}, k+(2^n-2)t, t, \lambda, t)$”, Prikladnaya Diskretnaya Matematika, 2025, no. 69, 111--120.}}

\date{September, 2025}

\author{ \href{https://orcid.org/0000-0002-3613-5949}{\includegraphics[scale=0.06]{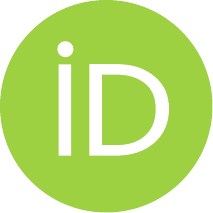}\hspace{1mm}Viktor A.~Byzov} \\
	Vyatka State University\\
	Kirov,  Russian Federation \\
	\texttt{vbyzov@yandex.ru} \\
	\And
	{ \href{https://orcid.org/0000-0002-3610-2872}{\includegraphics[scale=0.06]{orcid.pdf}\hspace{1mm}Igor A.~Pushkarev}} \\
	Vyatka State University\\
	Kirov,  Russian Federation \\
	\texttt{god\_sha@mail.ru}\\
}


\begin{document}
\maketitle

\begin{abstract}
An explicit construction of infinite sequences of strongly regular digraphs with parameter sets $((v+(2^{n+1}-4)t)2^{n-1}, k+(2^n-2)t, t, \lambda, t)$ is described. A computer program was used to find the initial digraphs. The remaining terms of the sequence are obtained by the constructed recurrence. Using the described approach, 11 families of strongly regular digraphs were found. In particular, these families contain digraphs~$\text{dsrg}(72, 18, 5, 3, 5)$, $\text{dsrg}(76, 19, 5, 4, 5)$, $\text{dsrg}(92, 23, 6, 5, 6)$ and~$\text{dsrg}(104, 26, 7, 5, 7)$, the question of the existence of which was previously open.
\end{abstract}

\keywords{strongly regular digraph, recurrent sequence, exchange matrix, Kronecker product, Artelys Knitro.}

\vspace{10pt}
\section{Introduction}

This paper describes the method by which the authors constructed several infinite families of digraphs. Before describing the method, we introduce the necessary concepts and notation.

We will be dealing with directed graphs (digraphs) without loops and without multiple edges in the same direction. The adjacency matrix of a digraph is, in the standard sense, the matrix in which a 1 is placed at the intersection of the $i$-th row and the $j$-th column if the digraph has an edge going from the vertex numbered $i$ to the vertex numbered $j$; otherwise, a 0 is placed in that entry of the matrix. Rows and columns are numbered with natural numbers.

We will also use the following standard notations for matrices of a special type:

\begin{enumerate}
	\item $I_m$ is identity matrix of order $m$;
	\item $J_m$ is a square matrix of order $m$ consisting only of ones;
	\item $J_{m, l}$ is a rectangular matrix of size $m\times l$, consisting only of ones;
	\item $K_m$ is an exchange matrix of order $m$, that is, a square matrix of order $m$ in which the antidiagonal contains ones and all other positions contain zeros. In other words, $K_m(i, j) = \delta_{m+1-i, j}$, where $\delta$ is the Kronecker delta.
\end{enumerate} 

By $A \otimes B$ we will denote the Kronecker product of matrices $A$ and $B$.

The concept of a strongly regular digraph is a generalization of the concept of a strongly regular graph (see, for example,~\cite{bibBose1, bibSRG1}). To the best of our knowledge, it was first introduced by A.\,M.~Duval in the paper~\cite{bibDuval1}.

\begin{definition}
	\label{def:main1}
	A strongly regular directed graph with parameter set ($v$, $k$, $t$, $\lambda$, $\mu$) is a digraph on $v$ vertices in which the following conditions hold:
	\begin{enumerate}
	\item the outdegree and indegree of each vertex are equal to $k$;
	\item for each vertex $x$ there exist exactly $t$ paths of length two from $x$ to $x$;
	\item for each edge $x \to y$, the number of directed paths of length two from $x$ to $y$ is equal to $\lambda$;
	\item for each ordered pair of vertices $(x, y)$ that does not form an edge of the digraph, the number of directed paths of length two from $x$ to $y$ is equal to $\mu$.
	\end{enumerate}
\end{definition}

An equivalent definition of a strongly regular digraph can be given through a set of conditions imposed on its adjacency matrix.

\begin{definition}
	\label{def:main2}
	A strongly regular directed graph with parameter set ($v$, $k$, $t$, $\lambda$, $\mu$) is a digraph whose adjacency matrix $A$ satisfies the following conditions:
    \begin{equation}
	\label{eq:main1}
	A^2 = t I_v + \lambda A + \mu (J_v - I_v - A),
	\end{equation}
	\begin{equation}
	\label{eq:main2}
	A J_v = J_v A = k J_v.
	\end{equation}
\end{definition}

For brevity, instead of the phrase ``strongly regular digraph with parameter set ($v$, $k$, $t$, $\lambda$, $\mu$)'', we shall use the notation $\text{dsrg}(v, k, t, \lambda, \mu)$, as is often done. It should be noted that when employing this notation in the present paper, we refer not to the entire set of such digraphs, but to a particular one --- namely, the digraph realized within the framework of the construction being described.

A number of necessary conditions for the existence of strongly regular digraphs with parameter set ($v$, $k$, $t$, $\lambda$, $\mu$) are presented in several works (see, for example, \cite{bibDuval1, bibJorgensen1}, among others). However, as in the case of strongly regular graphs, for many parameter sets, the question of the existence of a corresponding strongly regular digraph remains open. The table on the website~\cite{bibBrouwer1} systematizes information about known strongly regular digraphs and those parameter sets ($v$, $k$, $t$, $\lambda$, $\mu$) for which the question of their existence has not yet been resolved.

This paper describes a method that (with a certain amount of luck) can be used to construct an infinite family of strongly regular digraphs satisfying the condition $\mu = t$. The digraphs are built recursively: each subsequent digraph is obtained from the preceding member of the sequence. The parameters $t$, $\lambda$, and $\mu$ are the same for all digraphs in the sequence, while the number of vertices and the degrees of the vertices increase. We denote by $G_1, G_2, \ldots$ the strongly regular digraphs in the constructed sequence, and by $A_1, A_2, \ldots$ the corresponding adjacency matrices. A significant difficulty of the described method lies in obtaining the digraph $G_2$ from $G_1$: it is necessary for there to exist matrices satisfying a certain set of conditions. There are no guarantees, however, that such matrices exist at all. At the current stage, the authors are searching for these matrices using a computer program.

The paper is organized as follows. The first section describes the method of obtaining the matrix $A_2$ from the matrix $A_1$. The second section proves that, under the condition of the existence of matrices $A_1$ and $A_2$ (of the required form), an infinite sequence of strongly regular digraphs can be constructed. The third section presents families of digraphs found with the help of a computer program.

\section{Obtaining the digraph $G_2$}
\label{sec1}

For further discussion we will need an auxiliary sequence of matrices $P_n$, defined as follows:

\begin{equation}
\label{eq:Pndef}
P_n = J_{2^n, 1} \otimes K_{2^n} \otimes J_{t, t\cdot 2^n}.
\end{equation}

This sequence of matrices has a number of useful properties. Specifically, the following lemma holds.

\begin{lemma}
\label{lemma:propPn}
The matrix $P_n$, defined by formula~\eqref{eq:Pndef}, is a square matrix of order $t \cdot 4^n$ possessing the following properties:
\begin{enumerate}
	\item $P_n J_{t\cdot 4^n} = J_{t\cdot 4^n} P_n = t\cdot2^n J_{t\cdot 4^n}$;
	\item $P_n^2 = t J_{t\cdot 4^n}$.
\end{enumerate} 
\end{lemma}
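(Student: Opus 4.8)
The plan is to treat the two claims separately, disposing of the dimension count and the linear property~(1) by the Kronecker mixed-product rule, and reserving a careful entrywise argument for the quadratic property~(2). The dimension count is immediate: multiplying the row- and column-sizes of the three factors in~\eqref{eq:Pndef}, the rows contribute $2^n\cdot 2^n\cdot t$ and the columns contribute $1\cdot 2^n\cdot(t\cdot 2^n)$, both equal to $t\cdot 4^n$, so $P_n$ is square of the stated order. For property~(1) the idea is to factor the all-ones matrix so that its Kronecker structure matches that of $P_n$ on the relevant side, and then apply $(A\otimes B\otimes C)(D\otimes E\otimes F)=(AD)\otimes(BE)\otimes(CF)$. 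For $P_nJ_{t\cdot 4^n}$ I would write $J_{t\cdot 4^n}=J_1\otimes J_{2^n}\otimes J_{t\cdot 2^n}$, whose factor orders $1,2^n,t\cdot 2^n$ match the column-shapes of the factors of $P_n$; the three factorwise products are $J_{2^n,1}J_1=J_{2^n,1}$, the permutation identity $K_{2^n}J_{2^n}=J_{2^n}$ (each row of the exchange matrix sums to $1$), and $J_{t,t\cdot 2^n}J_{t\cdot 2^n}=(t\cdot 2^n)J_{t,t\cdot 2^n}$. Reassembling and recognizing $J_{2^n,1}\otimes J_{2^n}\otimes J_{t,t\cdot 2^n}=J_{t\cdot 4^n}$ yields $P_nJ_{t\cdot 4^n}=t\cdot 2^n\,J_{t\cdot 4^n}$. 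The computation of $J_{t\cdot 4^n}P_n$ is symmetric, this time decomposing $J_{t\cdot 4^n}=J_{2^n}\otimes J_{2^n}\otimes J_t$ to match the row-shapes $2^n,2^n,t$ of $P_n$ and using $J_{2^n}J_{2^n,1}=2^n J_{2^n,1}$, $J_{2^n}K_{2^n}=J_{2^n}$, and $J_tJ_{t,t\cdot 2^n}=t\,J_{t,t\cdot 2^n}$.

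Property~(2) is the main obstacle, precisely because the mixed-product rule \emph{cannot} be applied to $P_n^2$: the column-shapes $(1,2^n,t\cdot 2^n)$ of $P_n$ do not match its row-shapes $(2^n,2^n,t)$, and since the outer factors $J_{2^n,1}$ and $J_{t,t\cdot 2^n}$ are rectangular, no regrouping of the Kronecker factors aligns the two copies for multiplication. My plan is therefore to argue entrywise. Writing $M=J_{2^n,1}\otimes K_{2^n}$, a $4^n\times 2^n$ matrix consisting of $2^n$ stacked copies of $K_{2^n}$, we have $P_n=M\otimes J_{t,t\cdot 2^n}$ by associativity, and since $J_{t,t\cdot 2^n}$ is all-ones, every entry of $P_n$ depends only on its block indices: the entry in block-row $i\in\{1,\dots,4^n\}$ and block-column $j\in\{1,\dots,2^n\}$ equals $M_{ij}$, independently of the internal coordinates. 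I would then expand $(P_n^2)_{(i,a),(j',b')}=\sum_s (P_n)_{(i,a),s}(P_n)_{s,(j',b')}$ over a flat inner index $s\in\{1,\dots,t\cdot 4^n\}$. The subtlety, which is the real content of the proof, is that $s$ must be read through the column-block decomposition ($2^n$ blocks of width $t\cdot 2^n$) for the first factor and through the incompatible row-block decomposition ($4^n$ blocks of height $t$) for the second; a short floor/mod computation then shows that the column-block $j=\lfloor(i'-1)/2^n\rfloor+1$ seen by the first factor is governed by the row-block $i'$ seen by the second. Summing the $t$ internal row-positions produces a factor $t$, leaving $\sum_{i'=1}^{4^n}M_{i,\,j(i')}M_{i',\,j'}$; because each row of $M$ carries a single $1$ placed by $K_{2^n}$, the two exchange constraints (one from $i$, one from $j'$) select exactly one surviving term, so the sum equals $1$. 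Hence every entry of $P_n^2$ equals $t$, that is, $P_n^2=t\,J_{t\cdot 4^n}$. I expect the bookkeeping of these two mismatched block-decompositions of the inner index to be the only genuinely delicate point; everything else reduces to routine Kronecker algebra.
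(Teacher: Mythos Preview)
Your argument is correct throughout, including the entrywise count for property~(2), but your assertion that ``no regrouping of the Kronecker factors aligns the two copies for multiplication'' is false, and this is precisely where the paper's proof is much shorter than yours. By associativity of $\otimes$ one may bracket the two copies of $P_n$ \emph{differently}: write the left copy as $(J_{2^n,1}\otimes K_{2^n})\otimes J_{t,\,t\cdot 2^n}$, with column-shapes $(2^n,\;t\cdot 2^n)$, and the right copy as $J_{2^n,1}\otimes(K_{2^n}\otimes J_{t,\,t\cdot 2^n})$, with row-shapes $(2^n,\;t\cdot 2^n)$. These now match, so the two-factor mixed-product rule applies and gives
\[
P_n^2=\bigl((J_{2^n,1}\otimes K_{2^n})\cdot J_{2^n,1}\bigr)\otimes\bigl(J_{t,\,t\cdot 2^n}\cdot(K_{2^n}\otimes J_{t,\,t\cdot 2^n})\bigr)=J_{4^n,1}\otimes t\,J_{t,\,t\cdot 4^n}=t\,J_{t\cdot 4^n},
\]
since each row of $J_{2^n,1}\otimes K_{2^n}$ has a single $1$, and $J_{t,\,t\cdot 2^n}=J_{1,2^n}\otimes J_t$ lets one apply the mixed-product rule once more to the second factor. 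Your coordinate chase through the two incompatible block decompositions of the inner index reaches the same conclusion and is a perfectly sound alternative; what the paper's route buys is brevity, while yours has the virtue of making the ``exactly one surviving term'' mechanism visible. The general lesson is that $(A\otimes B)(C\otimes D)=(AC)\otimes(BD)$ only needs the inner shapes of $A,C$ and of $B,D$ to agree separately, and nothing forces you to group a triple product the same way on both sides before invoking it.
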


\begin{proof}
The validity of the first property is almost obvious: in each row and in each column of the matrix $P_n$ there are, by construction, exactly $t\cdot 2^n$ ones.

To prove the second point of the lemma, we will use properties of the Kronecker product (see, for example,~\cite{bibZhang1}):
\begin{equation}
\begin{split}
P_n^2 &= \left((J_{2^n, 1} \otimes K_{2^n}) \otimes J_{t, t\cdot 2^n}) \cdot (J_{2^n, 1} \otimes (K_{2^n} \otimes J_{t, t\cdot 2^n})\right) = \left((J_{2^n, 1} \otimes K_{2^n}) \cdot J_{2^n, 1}\right) \otimes\\
&\otimes \left(J_{t, t\cdot 2^n} \cdot (K_{2^n} \otimes J_{t, t\cdot 2^n})\right) = J_{4^n, 1} \otimes tJ_{t, t\cdot 4^n} = tJ_{t\cdot 4^n}.
\end{split}
\end{equation}
\end{proof}

Let $G_1$ be a strongly regular digraph with parameter set $(v, k, t, \lambda, t)$, and let $A_1$ be its adjacency matrix. It is known (see~\cite{bibDuval1}) that the condition $t > \lambda$ must hold. Introduce the notation: $s = t - \lambda$ ($s > 0$). From Definition~\ref{def:main2} it follows that the matrix $A_1$ satisfies the following relations:
\begin{equation}
\label{eq:condsA1}
A_1^2 + sA_1 = tJ_v, \quad A_1 J_v = J_v A_1 = k J_v.
\end{equation}

We will look for the second member of the sequence of digraphs --- $\text{dsrg}(2v+8t, k+2t, t, \lambda, t)$. Denote the adjacency matrix of this digraph by $A_2$, and we will seek it in the following form:

\begin{equation}
\label{eq:block_reprA2}
A_2 = \left(
\begin{array}{cccc}
A_1 & 0 & B_1 & 0 \\
0 & A_1 & 0 & B_1 \\
0 & C_1 & 0 & P_1 \\
C_1 & 0 & P_1 & 0
\end{array}
\right),
\end{equation}
\noindent where $B_1$ and $C_1$ are unknown matrices of sizes $v \times 4t$ and $4t \times v$, respectively.

By Definition~\ref{def:main2}, the following relations must hold for the matrix $A_2$:
\begin{equation}
\label{eq:condsA2}
A_2^2 + sA_2 = tJ_{2v+8t}, \quad A_2 J_{2v+8t} = J_{2v+8t} A_2 = (k+2t) J_{2v+8t}.
\end{equation}

Using the block representation~\eqref{eq:block_reprA2} of the matrix $A_2$, rewrite the first equation~\eqref{eq:condsA2} in the following form:
\begin{equation}
\label{eq:block_repr_eq}
\left(
\begin{array}{c|c|c|c}
A_1^2+sA_1 & B_1 C_1 & A_1 B_1 + sB_1 & B_1 P_1 \\ \hline
B_1 C_1 & A_1^2+sA_1 & B_1 P_1 & A_1 B_1 + sB_1 \\ \hline
P_1 C_1 & C_1 A_1 + sC_1 & P_1^2 & C_1 B_1 + sP_1 \\ \hline
C_1 A_1 + sC_1 & P_1 C_1 & C_1 B_1 + sP_1 & P_1^2
\end{array}
\right) = tJ_{2v+8t}.
\end{equation}

Note that, from Lemma~\ref{lemma:propPn} and the first relation~\eqref{eq:condsA1}, it follows that the diagonal blocks of the resulting block matrix have the required form. Thus, the matrices $B_1$ and $C_1$ must satisfy the following system of matrix equations:
\begin{equation}
\label{eq:systemA2part1}
\begin{cases}
B_1 C_1 = tJ_v,\\
B_1 P_1 = tJ_{v, 4t},\\
P_1 C_1 = tJ_{4t, v},\\
A_1 B_1 + sB_1 = tJ_{v, 4t},\\
C_1 A_1 + sC_1 = tJ_{4t, v},\\
C_1 B_1 + sP_1 = tJ_{4t}.
\end{cases}
\end{equation}

Moreover, from the second equation~\eqref{eq:condsA2} and the form of the matrices $A_1$ and $P_1$, it follows that the matrices $B_1$ and $C_1$ must also satisfy the conditions
\begin{equation}
\label{eq:systemA2part2}
B_1 J_{4t} = 2tJ_{v, 4t}, \quad J_v B_1 = kJ_{v, 4t}, \quad C_1 J_{v} = kJ_{4t, v}, \quad J_{4t} C_1 = 2tJ_{4t, v}.
\end{equation}

Additionally, we add two further conditions that the matrices $B_1$ and $C_1$ must satisfy. These conditions are not necessary for the existence of the digraph $G_2$, but they will be needed for constructing the subsequent digraphs in the sequence.

\begin{enumerate}
\item If any row of the matrix $B_1$ is split in half into two blocks of length $2t$, then there is exactly one block consisting entirely of ones and exactly one block consisting entirely of zeros.
\item If any column of the matrix $C_1$ is split in half into two blocks of length $2t$, then each block contains exactly $t$ ones.
\end{enumerate}

For convenience, call these constraints imposed on the matrices $B_1$ and $C_1$ \textit{the blockiness conditions} of these matrices.

\begin{remark}
If the matrices $B_1$ and $C_1$ satisfy the blockiness conditions, then the first three equalities in~\eqref{eq:systemA2part1} hold automatically. This follows from the fact that the matrix $P_1$ also satisfies both of the above conditions.
\end{remark}

Thus, the sought matrices $B_1$ and $C_1$ must satisfy the relations in~\eqref{eq:systemA2part1} and~\eqref{eq:systemA2part2} as well as the blockiness conditions. If such matrices are found, then we obtain the adjacency matrix $A_2$ of the strongly regular digraph $G_2$. In the next section, it will be shown how to continue this sequence of digraphs.

\section{Recurrent formulas for constructing a family of digraphs $\text{dsrg}((v+(2^{n+1}-4)t)2^{n-1}, k+(2^n-2)t, t, \lambda, t)$}
\label{sec2}

In this section, a new operation on matrices will be needed. Let $X$ be an $m\times l$ matrix. Denote by $\alpha_s(X)$ the matrix obtained by taking the first $\frac{m}{s}$ rows of $X$ (it is assumed that $m$ is divisible by $s$).

After introducing this operation, a recursive definition of the sequence of matrices $P_n$ can be given.
\begin{lemma}
\label{lemma:Pn_newdef}
Let $P'_n$ be the sequence of matrices defined as follows:
\begin{equation}
P'_1 = J_{2, 1} \otimes K_2 \otimes J_{t, 2t}, \quad P'_n = J_{2^n, 1} \otimes K_2 \otimes \alpha_{2^{n-1}}(P'_{n-1}) \otimes J_{1, 2}.
\end{equation}
Then the sequence $P'_n$ coincides with the previously defined sequence of matrices $P_n$.
\end{lemma}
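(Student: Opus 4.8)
The plan is to argue by induction on $n$. For the base case $n=1$, one simply notes that the defining formula gives $P'_1 = J_{2,1} \otimes K_2 \otimes J_{t,2t}$, which is precisely $P_1 = J_{2^1,1}\otimes K_{2^1}\otimes J_{t,t\cdot 2^1}$, so there is nothing to check.

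For the inductive step I would assume $P'_{n-1}=P_{n-1}=J_{2^{n-1},1}\otimes K_{2^{n-1}}\otimes J_{t,t\cdot 2^{n-1}}$ and first evaluate $\alpha_{2^{n-1}}(P'_{n-1})$. The key observation is that the leading factor $J_{2^{n-1},1}$ is a column of ones, so $P_{n-1}$ is nothing but $2^{n-1}$ vertically stacked copies of $M:=K_{2^{n-1}}\otimes J_{t,t\cdot 2^{n-1}}$. Since $M$ has exactly $t\cdot 2^{n-1}$ rows while $P_{n-1}$ has $t\cdot 4^{n-1}$ rows, taking the first $(t\cdot 4^{n-1})/2^{n-1}=t\cdot 2^{n-1}$ rows extracts exactly one copy of $M$; that is,
\[
\alpha_{2^{n-1}}(P'_{n-1})=K_{2^{n-1}}\otimes J_{t,t\cdot 2^{n-1}}.
\]
Establishing this identity carefully --- i.e. confirming that $\alpha$ simply strips off the replicating factor $J_{2^{n-1},1}$ --- is the step I expect to require the most care, since it is precisely where the exact divisibility of the row count is used.

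Substituting into the recurrence gives
\[
P'_n = J_{2^n,1}\otimes K_2 \otimes K_{2^{n-1}}\otimes J_{t,t\cdot 2^{n-1}}\otimes J_{1,2},
\]
and it then remains only to rearrange the factors into the closed form. By associativity of the Kronecker product the two exchange factors $K_2\otimes K_{2^{n-1}}$ and the two all-ones factors $J_{t,t\cdot 2^{n-1}}\otimes J_{1,2}$ are already adjacent and may be grouped without any appeal to commutativity. I would then invoke two identities: the elementary $J_{m,l}\otimes J_{p,q}=J_{mp,lq}$, which collapses $J_{t,t\cdot 2^{n-1}}\otimes J_{1,2}$ to $J_{t,t\cdot 2^n}$, and the exchange-matrix identity $K_a\otimes K_b=K_{ab}$, which collapses $K_2\otimes K_{2^{n-1}}$ to $K_{2^n}$. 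The latter is the only genuinely non-trivial identity, and I would justify it by a one-line index computation: under lexicographic flattening the row and column indices of a nonzero entry of $K_a\otimes K_b$ satisfy $i_1+j_1=a+1$ and $i_2+j_2=b+1$, whence their flattened values sum to $ab+1$, which is exactly the defining relation $\delta_{ab+1-i,\,j}$ of $K_{ab}$. Combining these collapses yields $P'_n=J_{2^n,1}\otimes K_{2^n}\otimes J_{t,t\cdot 2^n}=P_n$, which closes the induction.
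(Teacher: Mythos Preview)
Your proof is correct and follows essentially the same route as the paper's: induction on $n$, evaluation of $\alpha_{2^{n-1}}(P_{n-1})$ by stripping off the leading $J_{2^{n-1},1}$ factor, and then collapsing $K_2\otimes K_{2^{n-1}}$ and $J_{t,t\cdot 2^{n-1}}\otimes J_{1,2}$. In fact you are more explicit than the paper, which performs exactly these steps but without spelling out the justifications for the $\alpha$ computation or the identity $K_a\otimes K_b=K_{ab}$.
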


\begin{proof}
Using induction on $n$. The base case holds: $P'_1 = P_1$. Assume $P'_n = P_n$. Using the relation~\eqref{eq:Pndef}, transform $P'_{n+1}$:
\begin{multline}
P'_{n+1} = J_{2^{n+1}, 1} \otimes K_2 \otimes \alpha_{2^n}(P'_n) \otimes J_{1, 2} = \\ =
J_{2^{n+1}, 1} \otimes K_2 \otimes \alpha_{2^n}(J_{2^n, 1} \otimes K_{2^n} \otimes J_{t, t\cdot 2^n}) \otimes J_{1, 2} = J_{2^{n+1}, 1} \otimes K_2 \otimes K_{2^n} \otimes J_{t, t\cdot 2^n} \otimes J_{1, 2} = \\ =
J_{2^{n+1}, 1} \otimes K_{2^{n+1}} \otimes J_{t, t\cdot 2^{n+1}} = P_{n+1}.
\end{multline}
Thus, the inductive step is proved.
\end{proof}

Assume that matrices $B_1$ and $C_1$ have been found that satisfy the relations in~\eqref{eq:systemA2part1} and~\eqref{eq:systemA2part2} and the blockiness conditions (the sizes of these matrices are $v\times 4t$ and $4t\times v$, respectively). Based on $B_1$ and $C_1$, construct two sequences of matrices $B_n$ and $C_n$ according to the following recursive rules (the block representations of matrices are used here):
\begin{equation}
\label{eq:recBn}
B_n = \left(
\begin{array}{c}
K_2 \otimes B_{n-1} \otimes J_{1, 2} \\ \hline
I_2 \otimes P_{n-1} \otimes J_{1, 2}
\end{array}
\right) \; \text{for } \; n\geqslant 2.
\end{equation}
\begin{equation}
\label{eq:recCn}
\begin{split}
C_n& = \left(
\begin{array}{c|c}
J_{2^n, 1} \otimes I_2 \otimes \alpha_{2^{n-1}}(C_{n-1}) & J_{2^n, 1} \otimes I_2 \otimes \alpha_{2^{n-1}}(P_{n-1})
\end{array}
\right) \; \text{for } \; n\geqslant 2.
\end{split}
\end{equation}

It is straightforward to show that the matrix $B_n$ has size $(v+(2^{n+1}-4)t)2^{n-1} \times t\cdot 4^n$, and the matrix $C_n$ has size $t\cdot 4^n \times (v+(2^{n+1}-4)t)2^{n-1}$. This can be proved by induction on $n$.

\begin{remark}
Note that the operation $\alpha_{2^{n-1}}$ in the definition of the sequence $C_n$ is valid, since the number of rows of the matrix $C_{n-1}$ is divisible by $2^{n-1}$ (the validity of the operation is proved by induction). Moreover, it will be seen from what follows that in the definition of the operation $\alpha_{2^n}$ as applied to the sequence $C_n$, one could take not the first block of rows of the matrix, but any other.
\end{remark}

\begin{lemma}
\label{lemma:summBnCn}
The matrices $B_n$ and $C_n$ have the following properties:
\begin{enumerate}
	\item in each row of the matrix $B_n$ there are exactly $t\cdot 2^n$ ones;
	\item in each column of the matrix $B_n$ there are exactly $k + (2^n - 2)t$ ones;
	\item in each row of the matrix $C_n$ there are exactly $k + (2^n - 2)t$ ones;
	\item in each column of the matrix $C_n$ there are exactly $t\cdot 2^n$ ones.			
\end{enumerate}
\end{lemma}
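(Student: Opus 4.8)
The plan is to prove all four statements simultaneously by induction on $n$, using the multiplicativity of constant row and column sums under the Kronecker product: if every row of $A$ sums to $a$ and every row of $B$ sums to $b$, then every row of $A\otimes B$ sums to $ab$ (and likewise for columns), since $(A\otimes B)(\mathbf{1}\otimes\mathbf{1})=(A\mathbf{1})\otimes(B\mathbf{1})$ for the all-ones vector $\mathbf{1}$. The base case $n=1$ is exactly the content of~\eqref{eq:systemA2part2}: these four equalities state that every row of $B_1$ has $2t=t\cdot 2^1$ ones, every column of $B_1$ has $k=k+(2^1-2)t$ ones, every row of $C_1$ has $k$ ones, and every column of $C_1$ has $2t$ ones. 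For the matrix $P_{n-1}$ I will use Lemma~\ref{lemma:propPn}, which gives that all of its row and column sums equal $t\cdot 2^{n-1}$.

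The step for $B_n$ is the easy half. Since~\eqref{eq:recBn} stacks two blocks vertically, each row of $B_n$ lies in exactly one block, while each column sum is the sum of the two block column sums. Applying multiplicativity to $K_2\otimes B_{n-1}\otimes J_{1,2}$ and to $I_2\otimes P_{n-1}\otimes J_{1,2}$ (using the inductive row and column sums of $B_{n-1}$ together with the sums of $P_{n-1}$) gives row sum $1\cdot(t\cdot2^{n-1})\cdot2=t\cdot 2^n$ in each block, and column sums $k+(2^{n-1}-2)t$ and $t\cdot 2^{n-1}$ in the top and bottom blocks, whose sum equals $k+(2^n-2)t$. This proves statements~1 and~2.

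The step for $C_n$ is where the real work lies, because the recursion~\eqref{eq:recCn} passes $C_{n-1}$ and $P_{n-1}$ through the row-selection operator $\alpha_{2^{n-1}}$, and the column sum of a submatrix is not determined by the column sum of the whole matrix. To handle this I will strengthen the induction with the extra invariant that \emph{every column of $C_n$, partitioned into $2^n$ contiguous blocks of $t\cdot 2^n$ rows, contains exactly $t$ ones in each block}; for $n=1$ this is precisely the second blockiness condition on $C_1$. Granting this invariant for $C_{n-1}$, the operator $\alpha_{2^{n-1}}$ returns the first such block, so every column of $\alpha_{2^{n-1}}(C_{n-1})$ sums to $t$, while its rows retain the sum $k+(2^{n-1}-2)t$. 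For $P_{n-1}$ the leading factor $J_{2^{n-1},1}$ in~\eqref{eq:Pndef} shows that $\alpha_{2^{n-1}}(P_{n-1})=K_{2^{n-1}}\otimes J_{t,t\cdot2^{n-1}}$, whose row and column sums are $t\cdot 2^{n-1}$ and $t$ respectively. Since~\eqref{eq:recCn} concatenates two blocks horizontally, each column of $C_n$ lies in one block and each row sum is the sum of the two block row sums; multiplicativity then yields column sum $2^n\cdot1\cdot t=t\cdot 2^n$ in both blocks (statement~4) and row sum $(k+(2^{n-1}-2)t)+(t\cdot2^{n-1})=k+(2^n-2)t$ (statement~3).

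Finally I would close the induction by re-establishing the invariant for $C_n$: the leading factor $J_{2^n,1}$ in each block of~\eqref{eq:recCn} stacks $2^n$ identical copies of $I_2\otimes\alpha_{2^{n-1}}(C_{n-1})$, respectively of $I_2\otimes\alpha_{2^{n-1}}(P_{n-1})$, and these copies are exactly the $2^n$ row-blocks of height $t\cdot 2^n$; each such copy has column sum $1\cdot t=t$, so every block of every column of $C_n$ contains $t$ ones, as required (this also explains the remark that any row-block, not just the first, could be used in $\alpha$). The main obstacle is thus not any single computation but the recognition that the bare column-sum statement~4 is not self-propagating under $\alpha_{2^{n-1}}$ and must be upgraded to the uniform block-distribution invariant above; once that invariant is carried along, all four counts follow from routine Kronecker bookkeeping.
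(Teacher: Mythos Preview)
Your proof is correct and follows the same inductive approach as the paper (base case from~\eqref{eq:systemA2part2}, inductive step via Kronecker bookkeeping together with Lemma~\ref{lemma:propPn}). The one noteworthy difference is organisational: you correctly observe that statement~4 alone does not propagate through the row-selection operator $\alpha_{2^{n-1}}$, and you remedy this by folding the block-distribution invariant for the columns of $C_n$ into the same induction. The paper instead leaves its proof of this lemma as a brief sketch and establishes that invariant separately as part~2 of the subsequent Lemma~\ref{lemma:propsBnCnPn}; your packaging is arguably cleaner, since it makes explicit the extra hypothesis actually needed for the column count of $C_n$ to go through.
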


\begin{proof}
These statements can be proved by induction on $n$. The base case at $n=1$ holds (see~\eqref{eq:systemA2part2}). The inductive step is straightforward: the sequences $B_n$ and $C_n$ are defined recursively via the Kronecker product, which makes it possible to track how the number of ones in the rows and columns of the matrices changes. In the course of the proof, it is necessary to use the first point of Lemma~\ref{lemma:propPn} (it contains information about the number of ones in the rows and columns of the matrix $P_n$).
\end{proof}

For the constructed sequences of matrices, the following lemma also holds.
\begin{lemma}$~$
\label{lemma:propsBnCnPn}
\begin{enumerate}
\item The matrices $B_n$ and $P_n$ have the following property: if any row of these matrices is partitioned into consecutive blocks of length $t\cdot 2^n$, then there is exactly one block consisting entirely of ones, and all the remaining blocks consist entirely of zeros.
\item The matrices $C_n$ and $P_n$ have the following property: if any column of these matrices is partitioned into consecutive blocks of length $t\cdot 2^n$, then each block contains exactly $t$ ones.
\end{enumerate}
\end{lemma}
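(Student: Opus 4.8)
The plan is to prove both statements by induction on $n$, treating them as two essentially independent inductions that interact only through the auxiliary matrix $P_n$. First I would dispose of $P_n$ directly from its explicit form~\eqref{eq:Pndef}. Writing $K_{2^n} \otimes J_{t, t\cdot 2^n}$ as a $2^n \times 2^n$ array of blocks of size $t \times (t\cdot 2^n)$, the antidiagonal pattern of $K_{2^n}$ places a single all-ones block $J_{t, t\cdot 2^n}$ in each block-row and each block-column, with zeros elsewhere. Since $J_{2^n,1} \otimes (\cdot)$ merely stacks $2^n$ identical copies of this array vertically, every row of $P_n$ has, among its $2^n$ consecutive blocks of length $t\cdot 2^n$, exactly one all-ones block (part~1 for $P_n$), and every length-$(t\cdot 2^n)$ block of a column of $P_n$ meets exactly one all-ones $t \times (t\cdot 2^n)$ block, hence contains exactly $t$ ones (part~2 for $P_n$). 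The base case $n=1$ of the two inductions is then immediate, since the claims for $B_1$ and $C_1$ are precisely the blockiness conditions imposed in Section~\ref{sec1}.

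The inductive step rests on three elementary observations about how the operations in~\eqref{eq:recBn} and~\eqref{eq:recCn} act on the block structure. (i)~The operation $\cdot \otimes J_{1,2}$ replaces each column by two adjacent copies of itself; it therefore carries a consecutive block of length $\ell$ to a consecutive block of length $2\ell$ with aligned boundaries, while preserving the ``all-ones/all-zeros'' status of a block in each row and preserving the number of ones of a block in each column. In particular it turns a length-$(t\cdot 2^{n-1})$ block partition into a length-$(t\cdot 2^n)$ one. (ii)~Tensoring on the left by $K_2$ or $I_2$ produces a $2\times2$ block layout, block-antidiagonal for $K_2$ and block-diagonal for $I_2$; in either case each row (column) of the result is a row (column) of the tensored matrix placed in one half with zeros in the other. (iii)~Tensoring on the left by $J_{2^n,1}$ stacks $2^n$ identical copies vertically, so the partition of a column of $C_n$ into length-$(t\cdot 2^n)$ blocks coincides exactly with this partition into copies. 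The one genuinely delicate point, which I expect to be the main obstacle, is the size bookkeeping needed to line up the truncation $\alpha_{2^{n-1}}$ with the inductive partition: one must verify that $\alpha_{2^{n-1}}(C_{n-1})$ and $\alpha_{2^{n-1}}(P_{n-1})$ are exactly the first length-$(t\cdot 2^{n-1})$ row-block of $C_{n-1}$ and of $P_{n-1}$, so that the inductive hypothesis (resp.\ the already-proved fact about $P_{n-1}$) applies to them.

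For part~1, consider a row of $B_n$ and its partition into $2^n$ blocks of length $t\cdot 2^n$. If the row lies in the top strip $K_2 \otimes B_{n-1} \otimes J_{1,2}$, then by~(i) the matrix $B_{n-1}\otimes J_{1,2}$ inherits from the inductive hypothesis a length-$(t\cdot 2^n)$ partition with exactly one all-ones block per row; by~(ii) the corresponding row of $K_2 \otimes (B_{n-1}\otimes J_{1,2})$ is this row placed in one half with zeros in the other, so across all $2^n$ blocks there is still exactly one all-ones block. If the row lies in the bottom strip $I_2 \otimes P_{n-1} \otimes J_{1,2}$, the identical argument with $P_{n-1}$ in place of $B_{n-1}$ (using the property of $P_{n-1}$ from part~1) again yields exactly one all-ones block. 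This completes the inductive step for $B_n$.

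For part~2, consider a column of $C_n$. By~(iii) its $2^n$ length-$(t\cdot 2^n)$ blocks are the $2^n$ stacked copies of $I_2 \otimes \alpha_{2^{n-1}}(C_{n-1})$ or of $I_2 \otimes \alpha_{2^{n-1}}(P_{n-1})$, according as the column lies in the left or the right part of~\eqref{eq:recCn}; all copies being identical, it suffices to count ones in one copy. By~(ii) a column of $I_2 \otimes M$ is a column of $M$ sitting in one half with zeros in the other, so the number of ones in the block equals the number of ones in the corresponding column of $M=\alpha_{2^{n-1}}(C_{n-1})$ or $M=\alpha_{2^{n-1}}(P_{n-1})$. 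By the alignment observation above, this column of $M$ is the first length-$(t\cdot 2^{n-1})$ block of a column of $C_{n-1}$ (resp.\ $P_{n-1}$), which by the inductive hypothesis (resp.\ the already-proved property of $P_{n-1}$) contains exactly $t$ ones. Hence every block of every column of $C_n$ contains exactly $t$ ones, completing the induction.
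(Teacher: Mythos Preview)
Your proposal is correct and follows essentially the same approach as the paper: a direct structural argument for $P_n$ from~\eqref{eq:Pndef}, and induction on $n$ for $B_n$ and $C_n$ with the base case supplied by the blockiness conditions and the inductive step driven by how the Kronecker factors $K_2$, $I_2$, $J_{1,2}$, $J_{2^n,1}$ and the truncation $\alpha_{2^{n-1}}$ act on the block partitions. Your write-up is more explicit than the paper's (in particular, your observations (i)--(iii) and the size check that $\alpha_{2^{n-1}}(C_{n-1})$ has exactly $t\cdot 2^{n-1}$ rows, i.e.\ one column-block of $C_{n-1}$, are stated more carefully), but the underlying argument is the same.
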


\begin{proof}
First, note that the matrix sizes change in such a way that the partition of rows (columns) into blocks of the required length can always be performed correctly.

The validity of the stated properties for the matrix $P_n$ follows directly from the formula~\eqref{eq:Pndef}.

The fact that the sequences $B_n$ and $C_n$ satisfy the stated properties can be proved by induction on $n$. The base case at $n=1$ holds, since by assumption the matrices $B_1$ and $C_1$ satisfy the blockiness conditions. The inductive step follows from the form of the recurrence relations~\eqref{eq:recBn} and~\eqref{eq:recCn}. Indeed, if a row of the matrix $B_{n-1}$ (the matrix $P_{n-1}$) contained a block of ones, then in the matrix $K_2 \otimes B_{n-1} \otimes J_{1, 2}$ (in the matrix $I_2 \otimes P_{n-1} \otimes J_{1, 2}$) it is transformed into a block with twice as many ones. Suppose that in the columns of the matrix $C_{n-1}$ (the matrix $P_{n-1}$) each block contains $t$ ones. Then the matrix $\alpha_{2^{n-1}}(C_{n-1})$ (the matrix $\alpha_{2^{n-1}}(P_{n-1})$) contains the first blocks of columns, and the columns of the matrix $J_{2^n, 1} \otimes I_2 \otimes \alpha_{2^{n-1}}(C_{n-1})$ (the matrix $J_{2^n, 1} \otimes I_2 \otimes \alpha_{2^{n-1}}(P_{n-1})$) can be partitioned into blocks of twice the length, which still contain $t$ ones.
\end{proof}

Let's move on to the main result of the work.

\begin{theorem}
\label{th:main}
Let $A_n$ be a sequence of matrices in which $A_1$ is the adjacency matrix of the digraph $\text{dsrg}(v, k, t, \lambda, t)$, and for $n\geqslant 1$ it holds that
\begin{equation}
\label{eq:block_reprAnp1}
A_{n+1} = \left(
\begin{array}{cccc}
A_n & 0 & B_n & 0 \\
0 & A_n & 0 & B_n \\
0 & C_n & 0 & P_n \\
C_n & 0 & P_n & 0
\end{array}
\right)
= \left(
\begin{array}{cc}
I_2\otimes A_n & I_2\otimes B_n \\
K_2\otimes C_n & K_2\otimes P_n
\end{array}
\right)
,
\end{equation}
\noindent where $B_n$, $C_n$, and $P_n$ are the previously defined sequences (it is assumed that the matrices $B_1$ and $C_1$ exist).

Then the matrices $A_n$ are adjacency matrices of directed strongly regular digraphs with parameter sets~\mbox{$((v+(2^{n+1}-4)t)2^{n-1}, k+(2^n-2)t, t, \lambda, t)$}.
\end{theorem}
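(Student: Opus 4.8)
The plan is to argue by induction on $n$, but with a hypothesis stronger than the bare assertion of the theorem. Along with the claim that $A_n$ is the adjacency matrix of a $\text{dsrg}((v+(2^{n+1}-4)t)2^{n-1},k+(2^n-2)t,t,\lambda,t)$, I would carry the three auxiliary matrix identities
\begin{equation*}
A_nB_n+sB_n=tJ,\qquad C_nA_n+sC_n=tJ,\qquad C_nB_n+sP_n=tJ,
\end{equation*}
where $s=t-\lambda$. The base case $n=1$ is exactly the hypothesis of the theorem together with the last three lines of~\eqref{eq:systemA2part1}. The whole point of the representation~\eqref{eq:block_reprAnp1} is that $A_{n+1}$ is assembled from $A_n,B_n,C_n,P_n$ in precisely the block pattern by which $A_2$ was assembled from $A_1,B_1,C_1,P_1$ in Section~\ref{sec1}. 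Hence squaring $A_{n+1}$ and imposing the analogue of~\eqref{eq:condsA2} (with the sizes updated) produces the very same block equation~\eqref{eq:block_repr_eq}, now with every subscript $1$ replaced by $n$. Verifying that $A_{n+1}$ is strongly regular therefore reduces to checking, at index $n$, the six equations of~\eqref{eq:systemA2part1}, the four degree equations of~\eqref{eq:systemA2part2}, and that $A_{n+1}$ is a genuine $0$/$1$ adjacency matrix with zero diagonal.

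I would dispatch the routine parts first. The diagonal blocks of~\eqref{eq:block_repr_eq} are handled by the inductive hypothesis $A_n^2+sA_n=tJ$ and by $P_n^2=tJ$ from Lemma~\ref{lemma:propPn}. The three mixed products $B_nC_n=tJ$, $B_nP_n=tJ$, $P_nC_n=tJ$ follow verbatim from the argument in the Remark after~\eqref{eq:systemA2part2}: by Lemma~\ref{lemma:propsBnCnPn} each row of $B_n$ (resp.\ of $P_n$) has exactly one all-ones block of length $t\cdot2^n$ and zeros elsewhere, while each length-$t\cdot2^n$ block of every column of $C_n$ (resp.\ of $P_n$) carries exactly $t$ ones, so each entry of the product equals $t$. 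The degree conditions follow by summing the row/column counts supplied by Lemma~\ref{lemma:summBnCn} and part~1 of Lemma~\ref{lemma:propPn}: in a top block row of $A_{n+1}$ the $A_n$-part contributes $k+(2^n-2)t$ and the $B_n$-part contributes $t\cdot2^n$, totalling $k+(2^{n+1}-2)t$, and the lower block rows give the same total from $C_n$ and $P_n$; the column sums are identical. Finally $A_{n+1}$ is a $0$/$1$ matrix because $A_n,B_n,C_n,P_n$ all are (the recursions~\eqref{eq:recBn}--\eqref{eq:recCn} only Kronecker-multiply $0$/$1$ matrices), and its diagonal vanishes because its diagonal blocks are $A_n$ (zero diagonal by hypothesis) and zero blocks.

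The real content is to propagate the three auxiliary identities from level $n$ to level $n+1$, and this is the step I expect to be the main obstacle. Here one substitutes the recursive forms~\eqref{eq:recBn}, \eqref{eq:recCn}, the block form~\eqref{eq:block_reprAnp1}, and the two shapes of $P_n$ from Lemma~\ref{lemma:Pn_newdef}, then works through the Kronecker algebra using the mixed-product rule together with $\alpha_{2^n}(C_n)B_n=\alpha_{2^n}(C_nB_n)$ and $\alpha_{2^n}(P_n)P_n=\alpha_{2^n}(P_n^2)$. For example, computing $(A_{n+1}+sI)B_{n+1}$ block-row by block-row, the top rows collapse to $tJ$ via $A_nB_n+sB_n=tJ$ and $B_nP_n=tJ$, and the bottom rows collapse to $tJ$ via $C_nB_n+sP_n=tJ$ and $P_n^2=tJ$; the identity $C_{n+1}A_{n+1}+sC_{n+1}=tJ$ is symmetric. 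For $C_{n+1}B_{n+1}+sP_{n+1}=tJ$ the decisive cancellation is that, after inserting $C_nB_n=tJ-sP_n$ and $P_n^2=tJ$, the sum splits off a term equal to $-sP_{n+1}$ (recognised through Lemma~\ref{lemma:Pn_newdef}) which is absorbed by the added $sP_{n+1}$, leaving $J_{2^{n+1},1}\otimes(K_2+I_2)\otimes tJ_{t\cdot2^n,\,2t\cdot4^n}$; since $K_2+I_2=J_2$ this collapses exactly to $tJ_{t\cdot4^{n+1}}$. Carrying out these three Kronecker computations carefully — keeping the block sizes aligned and tracking which inductive identity feeds which block — is the only delicate point; everything else is bookkeeping.
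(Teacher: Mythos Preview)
Your proposal is correct and follows essentially the same route as the paper's proof: induction on $n$, reducing $A_{n+1}^2+sA_{n+1}=tJ$ to the six block identities, handling $B_nC_n$, $B_nP_n$, $P_nC_n$ via the blockiness properties (Lemma~\ref{lemma:propsBnCnPn}), and establishing the three ``hard'' identities $A_nB_n+sB_n=tJ$, $C_nA_n+sC_n=tJ$, $C_nB_n+sP_n=tJ$ by substituting the recursions~\eqref{eq:recBn}--\eqref{eq:recCn} and the block form~\eqref{eq:block_reprAnp1} and pushing through the Kronecker algebra exactly as you outline. The only cosmetic difference is that you carry those three identities explicitly in a strengthened induction hypothesis, whereas the paper states only $A_n^2+sA_n=tJ$ as the hypothesis and tacitly extracts the level-$(n-1)$ identities from its block expansion; the computations themselves are identical up to an index shift.
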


\begin{proof}
Introduce a notation for the order of the matrix $A_n$ (it is straightforward to show that it will indeed be this):
\begin{equation}
    v_n = (v+(2^{n+1}-4)t)2^{n-1}.
\end{equation}

It is necessary to prove two statements (see Definition~\ref{def:main2}):
\begin{enumerate}
	\item in each row and in each column of the matrix $A_n$ there are exactly $k+(2^n-2)t$ ones;
	\item $A_n^2 + sA_n = tJ_{v_n}$.
\end{enumerate}

Prove the first statement by induction on $n$. For the matrices $A_1$ and $A_2$ this fact holds. Assume that in each row and each column of $A_n$ there are exactly $k+(2^n-2)t$ ones. Then, using Lemma~\ref{lemma:propPn}, Lemma~\ref{lemma:summBnCn}, and formula~\eqref{eq:block_reprAnp1}, it follows that in each row and each column of $A_{n+1}$ there is also the required number of ones.

The proof of the second statement will also be carried out by induction on $n$. The base case was proved in the previous section by constructing the adjacency matrix of the digraph~$\text{dsrg}(2v+8t, k+2t, t, \lambda, t)$ in the block form given in formula~\eqref{eq:block_reprAnp1}.

Assume that $A_n^2 + sA_n = tJ_{v_n}$. Transform the expression $A_{n+1}^2 + sA_{n+1}$:
\begin{equation}
A_{n+1}^2 + sA_{n+1} = 
\left(
\begin{array}{c|c|c|c}
	A_n^2+sA_n & B_n C_n & A_n B_n + sB_n & B_n P_n \\ \hline
	B_n C_n & A_n^2+sA_n & B_n P_n & A_n B_n + sB_n \\ \hline
	P_n C_n & C_n A_n + sC_n & P_n^2 & C_n B_n + sP_n \\ \hline
	C_n A_n + sC_n & P_n C_n & C_n B_n + sP_n & P_n^2
\end{array}
\right).
\end{equation}

It is necessary to prove that all blocks of the resulting matrix are of the form $tJ$. The blocks on the main diagonal have the required form by the induction hypothesis and by Lemma~\ref{lemma:propPn}. Thus, it remains to prove the following six equalities:
\begin{align}
B_n C_n &= tJ_{v_n}; \label{eq:system1}\\
B_n P_n &= tJ_{v_n, t\cdot{4^n}}; \label{eq:system2}\\
P_n C_n &= tJ_{t\cdot{4^n}, v_n}; \label{eq:system3}\\
A_n B_n + sB_n &= tJ_{v_n, t\cdot{4^n}}; \label{eq:system4}\\
C_n A_n + sC_n &= tJ_{t\cdot{4^n}, v_n}; \label{eq:system5}\\
C_n B_n + sP_n &= tJ_{t\cdot{4^n}} \label{eq:system6}.
\end{align}

The validity of equalities~\eqref{eq:system1}--\eqref{eq:system3} follows automatically from Lemma~\ref{lemma:propsBnCnPn}. Based on the induction hypothesis, prove equalities~\eqref{eq:system4}--\eqref{eq:system6}. In the proof, Lemmas~\ref{lemma:propPn} and~\ref{lemma:Pn_newdef} will also be used.

Equality~\eqref{eq:system4}:
\begin{multline}
(A_n + s I_{v_n})B_n = \left(
\begin{array}{c|c}
I_2 \otimes (A_{n-1} + sI_{v_{n-1}}) & I_2 \otimes B_{n-1} \\ \hline
K_2 \otimes C_{n-1} & K_2 \otimes P_{n-1} + sI_{2t\cdot 4^{n-1}}
\end{array}	
\right)\cdot
\left(
\begin{array}{c}
K_2 \otimes B_{n-1} \otimes J_{1, 2} \\ \hline
I_2 \otimes P_{n-1} \otimes J_{1, 2}
\end{array}
\right)=
\\
=\left(
\begin{array}{c}
(I_2 \otimes (A_{n-1} + sI_{v_{n-1}}))\cdot (K_2 \otimes (B_{n-1} \otimes J_{1, 2})) + (I_2 \otimes B_{n-1})\cdot (I_2 \otimes (P_{n-1} \otimes J_{1, 2})) \\ \hline
(K_2 \otimes C_{n-1})\cdot (K_2 \otimes (B_{n-1} \otimes J_{1, 2})) + (K_2 \otimes P_{n-1} + sI_{2t\cdot 4^{n-1}})\cdot (I_2 \otimes (P_{n-1} \otimes J_{1, 2}))
\end{array}
\right)=
\\
=\left(
\begin{array}{c}
K_2\otimes ((A_{n-1} + sI_{v_{n-1}})\cdot (B_{n-1}\otimes J_{1, 2})) + I_2\otimes (B_{n-1}\cdot (P_{n-1}\otimes J_{1, 2})) \\ \hline
I_2\otimes (C_{n-1}\cdot (B_{n-1}\otimes J_{1,2})) + K_2\otimes (P_{n-1}\cdot (P_{n-1}\otimes J_{1, 2})) + sI_2\otimes P_{n-1}\otimes J_{1, 2} 
\end{array}
\right)=
\\
=\left(
\begin{array}{c}
K_2\otimes ((A_{n-1} + sI_{v_{n-1}})\cdot B_{n-1}) \otimes J_{1, 2} + I_2 \otimes (B_{n-1}\cdot P_{n-1}) \otimes J_{1, 2}\\ \hline
I_2\otimes(C_{n-1}B_{n-1}+sP_{n-1})\otimes J_{1, 2} + K_2\otimes (P_{n-1}\cdot P_{n-1})\otimes J_{1, 2}
\end{array}
\right)=
\\
=\left(
\begin{array}{c}
K_2\otimes tJ_{v_{n-1}, t\cdot 4^{n-1}} \otimes J_{1, 2} + I_2 \otimes tJ_{v_{n-1}, t\cdot 4^{n-1}} \otimes J_{1, 2}\\ \hline
I_2\otimes tJ_{t\cdot 4^{n-1}} \otimes J_{1, 2} + K_2\otimes tJ_{t\cdot 4^{n-1}} \otimes J_{1, 2}
\end{array}
\right)=
\left(
\begin{array}{c}
J_2\otimes tJ_{v_{n-1}, t\cdot 4^{n-1}} \otimes J_{1, 2}\\ \hline
J_2\otimes tJ_{t\cdot 4^{n-1}} \otimes J_{1, 2}
\end{array}
\right) =\\= tJ_{v_n, t\cdot{4^n}}.
\end{multline}

Equality~\eqref{eq:system5}:
\begin{multline}
C_n(A_n+sI_{v_n}) =
\left(
\begin{array}{c|c}
J_{2^n, 1} \otimes I_2 \otimes \alpha_{2^{n-1}}(C_{n-1}) & J_{2^n, 1} \otimes I_2 \otimes \alpha_{2^{n-1}}(P_{n-1})
\end{array}
\right)\times\\
\times
\left(
\begin{array}{c|c}
I_2 \otimes (A_{n-1} + sI_{v_{n-1}}) & I_2 \otimes B_{n-1} \\ \hline
K_2 \otimes C_{n-1} & K_2 \otimes P_{n-1} + sI_{2t\cdot 4^{n-1}}
\end{array}	
\right)=\\
=\left(
\begin{array}{c|c}
\text{\footnotesize $((J_{2^n, 1} \otimes I_2) \otimes \alpha_{2^{n-1}}(C_{n-1}))\cdot (I_2 \otimes (A_{n-1} + sI_{v_{n-1}}))+((J_{2^n, 1} \otimes I_2) \otimes \alpha_{2^{n-1}}(P_{n-1}))\cdot(K_2 \otimes C_{n-1})$} & ~
\end{array}
\right.
\\
\left.
\begin{array}{c|c}
~ & \text{\footnotesize $((J_{2^n, 1} \otimes I_2) \otimes \alpha_{2^{n-1}}(C_{n-1}))\cdot (I_2 \otimes B_{n-1}) + ((J_{2^n, 1} \otimes I_2) \otimes \alpha_{2^{n-1}}(P_{n-1}))\cdot(K_2 \otimes P_{n-1} + sI_{2t\cdot 4^{n-1}})$}
\end{array}
\right)=
\\
=\left(
\begin{array}{c|c}
\text{\footnotesize $((J_{2^n, 1} \otimes I_2)\cdot I_2) \otimes (\alpha_{2^{n-1}}(C_{n-1})\cdot (A_{n-1} + sI_{v_{n-1}}))+((J_{2^n, 1} \otimes I_2)\cdot K_2) \otimes (\alpha_{2^{n-1}}(P_{n-1})\cdot C_{n-1})$} & ~
\end{array}
\right.
\\
\begin{array}{c|c}
~ & \text{\footnotesize $((J_{2^n, 1} \otimes I_2)\cdot I_2) \otimes (\alpha_{2^{n-1}}(C_{n-1}) \cdot B_{n-1})+((J_{2^n, 1}\otimes I_2) \cdot K_2) \otimes (\alpha_{2^{n-1}}(P_{n-1})\cdot P_{n-1}) + $}
\end{array}
\\
\left.
\begin{array}{c}
\text{\footnotesize $ + sJ_{2^n, 1}\otimes I_2\otimes \alpha_{2^{n-1}}(P_{n-1})$}
\end{array}
\right)=
\\
=\left(
\begin{array}{c|c}
J_{2^n, 1} \otimes I_2 \otimes \alpha_{2^{n-1}}(tJ_{t\cdot 4^{n-1}, v_{n-1}}) + J_{2^n, 1} \otimes K_2 \otimes \alpha_{2^{n-1}}(tJ_{t\cdot 4^{n-1}, v_{n-1}}) & ~
\end{array}
\right.\\
\left.
\begin{array}{c|c}
~ & J_{2^n, 1} \otimes I_2 \otimes \alpha_{2^{n-1}}(tJ_{t\cdot 4^{n-1}}) + J_{2^n, 1} \otimes K_2 \otimes \alpha_{2^{n-1}}(tJ_{t\cdot 4^{n-1}})
\end{array}
\right)=\\
=\left(
\begin{array}{c|c}
J_{2^n, 1} \otimes J_2 \otimes \alpha_{2^{n-1}}(tJ_{t\cdot 4^{n-1}, v_{n-1}}) &
J_{2^n, 1} \otimes J_2 \otimes \alpha_{2^{n-1}}(tJ_{t\cdot 4^{n-1}})
\end{array}
\right)=tJ_{t\cdot{4^n}, v_n}.
\end{multline}

Equality~\eqref{eq:system6}:
\begin{multline}
C_n B_n + sP_n =
\left(
\begin{array}{c|c}
J_{2^n, 1} \otimes I_2 \otimes \alpha_{2^{n-1}}(C_{n-1}) & J_{2^n, 1} \otimes I_2 \otimes \alpha_{2^{n-1}}(P_{n-1})
\end{array}
\right)\times\\
\times
\left(
\begin{array}{c}
K_2 \otimes B_{n-1} \otimes J_{1, 2} \\ \hline
I_2 \otimes P_{n-1} \otimes J_{1, 2}
\end{array}	
\right) + sJ_{2^n, 1} \otimes K_2 \otimes \alpha_{2^{n-1}}(P_{n-1}) \otimes J_{1, 2}=\\
= ((J_{2^n, 1} \otimes I_2) \otimes \alpha_{2^{n-1}}(C_{n-1}))\cdot (K_2 \otimes (B_{n-1} \otimes J_{1, 2})) + 
\\
 + ((J_{2^n, 1} \otimes I_2) \otimes \alpha_{2^{n-1}}(P_{n-1}))\cdot (I_2 \otimes (P_{n-1} \otimes J_{1, 2})) + sJ_{2^n, 1} \otimes K_2 \otimes \alpha_{2^{n-1}}(P_{n-1}) \otimes J_{1, 2}
=\\
= ((J_{2^n, 1} \otimes I_2)\cdot K_2) \otimes (\alpha_{2^{n-1}}(C_{n-1}) \cdot (B_{n-1} \otimes J_{1, 2})) + 
\\
 + ((J_{2^n, 1}\otimes I_2)\cdot I_2)\otimes (\alpha_{2^{n-1}}(P_{n-1})\cdot (P_{n-1}\otimes J_{1,2})) + sJ_{2^n, 1} \otimes K_2 \otimes \alpha_{2^{n-1}}(P_{n-1}) \otimes J_{1, 2}
=\\
= J_{2^n, 1} \otimes K_2 \otimes \alpha_{2^{n-1}}(tJ_{t\cdot 4^{n-1}}) \otimes J_{1, 2} + J_{2^n, 1} \otimes I_2 \otimes \alpha_{2^{n-1}}(tJ_{t\cdot 4^{n-1}}) \otimes J_{1, 2}
=\\= J_{2^n, 1} \otimes J_2 \otimes \alpha_{2^{n-1}}(tJ_{t\cdot 4^{n-1}}) \otimes J_{1, 2} = tJ_{t\cdot 4^n}.
\end{multline}

\end{proof}

\section{Results of the computer program}
\label{sec3}

The authors implemented a program in the Julia language to search for families of strongly regular digraphs using the method described in the previous sections. The input to the program is the adjacency matrix $A_1$ of the first digraph in the sequence; the program then searches for a matrix $A_2$ of the required form and, if the search for $A_2$ succeeds, constructs the adjacency matrices $A_3, \ldots, A_6$ via the recurrence (see Theorem~\ref{th:main}).

To construct the matrix $A_2$, a search was performed for binary matrices $B_1$ and $C_1$ that satisfy the relations in \eqref{eq:systemA2part1} and \eqref{eq:systemA2part2} and the blockiness conditions. To simplify this search, the Artelys Knitro optimization library was used \cite{bibKnitro1} (a free trial version was employed). As the objective, the formal function $F=1$ was chosen, and all constraints imposed on $B_1$ and $C_1$ were added as constraints to this optimization problem. Thanks to the efficient algorithms of Artelys Knitro for finding a point in the feasible region, it was possible to avoid exhaustive search when looking for $B_1$ and $C_1$. The program’s source code and the adjacency matrices of the obtained digraphs are available in the repository \cite{bibGit1}.

The program results are presented below in Table~\ref{table:dsrg}. This table lists the parameters of the first two digraphs in the sequence and the parameters of the $n$-th term of the sequence. In the second column, those parameter sets are highlighted for which the existence of the corresponding digraphs was previously open (according to the table on the website \cite{bibBrouwer1} at the time of writing).

The last column of the table reports the time to find the second digraph in the sequence using the Artelys Knitro library. The program was run on a computer with an Intel Core i5-7400 (3.00 GHz) processor and 32 GB of RAM (for different parameter sets listed in the table, different Artelys Knitro settings were used).

\begin{table}[h!]
\begin{center}
\caption{Discovered families of directed strongly regular digraphs}    
\begin{tabular}{| c | c | c | c | c |} 
    \hline
    Parameters of $G_1$ & Parameters of $G_2$ & Parameters of $G_n$ & \begin{tabular}{@{}c@{}} Time to find $G_2$ \\ (sec.)\end{tabular} \\ 
    \hline & & & \\ [-1.0em]
    $(6, 3, 2, 1, 2)$ & $(28, 7, 2, 1, 2)$ & $(2^{n} ( 2^{n + 1} - 1), 2^{n+1} - 1, 2, 1, 2)$ & 3.89 \\
    $(8, 4, 3, 1, 3)$ & $(40, 10, 3, 1, 3)$ & $(2^{n} ( 3 \cdot 2^{n} - 2), 3 \cdot 2^{n} - 2, 3, 1, 3)$  & 4.03 \\
    $(10, 5, 3, 2, 3)$ & $(44, 11, 3, 2, 3)$ & $(2^{n} ( 3 \cdot 2^{n} - 1), 3 \cdot 2^{n} - 1, 3, 2, 3)$ & 9.27 \\
    $(12, 6, 4, 2, 4)$ & $(56, 14, 4, 2, 4)$ & $(2^{n} (4 \cdot 2^{n} - 2), 4 \cdot 2^{n} - 2, 4, 2, 4)$ & 16.23 \\
    $(14, 7, 4, 3, 4)$ & $(60, 15, 4, 3, 4)$ & $(2^{n} (4 \cdot 2^{n} - 1), 4 \cdot 2^{n} - 1, 4, 3, 4)$ & 2496.14 \\
    $(16, 8, 5, 3, 5)$ & \fbox{(72, 18, 5, 3, 5)} & $(2^{n} (5 \cdot 2^{n} - 2), 5 \cdot 2^{n} - 2, 5, 3, 5)$ & 52.38 \\
    $(18, 9, 5, 4, 5)$ & \fbox{(76, 19, 5, 4, 5)} & $(2^{n} (5 \cdot 2^{n} - 1), 5 \cdot 2^{n} - 1, 5, 4, 5)$ & 840.40 \\
    $(18, 9, 6, 3, 6)$ & $(84, 21, 6, 3, 6)$ & $(2^{n} (6 \cdot 2^{n} - 3), 6 \cdot 2^{n} - 3, 6, 3, 6)$ & 233.92 \\
    $(20, 10, 6, 4, 6)$ & $(88, 22, 6, 4, 6)$ & $(2^{n} (6 \cdot 2^{n} - 2), 6 \cdot 2^{n} - 2, 6, 4, 6)$ & 151.42 \\
    $(22, 11, 6, 5, 6)$ & \fbox{\:\:(92, 23, 6, 5, 6)} & $(2^{n} (6 \cdot 2^{n} - 1), 6 \cdot 2^{n} - 1, 6, 5, 6)$ & 1260.85 \\
    $(24, 12, 7, 5, 7)$ & \fbox{(104, 26, 7, 5, 7)} & $(2^{n} (7 \cdot 2^{n} - 2), 7 \cdot 2^{n} - 2, 7, 5, 7)$ & 525.10 \\ [0.3em]
    \hline
\end{tabular}
\label{table:dsrg}
\end{center}
\end{table}

It is important to note that the developed program was also run for parameter sets not shown in the table. However, in other cases the search for the matrix $A_2$ did not succeed.

It should also be noted that all directed strongly regular digraphs $G_1$ for which the method described in this work succeeded have parameters of the form $(2(t+\lambda), t+\lambda, t, \lambda, t)$. The authors do not yet know whether the method works for parameter sets of a different form, or whether such a relation between the parameters is a necessary condition for the applicability of the proposed approach.

\section{Conclusion}

The construction presented in this work makes it possible to build infinite sequences of directed strongly regular digraphs. To apply this construction, one must take the adjacency matrix $A_1$ of the first digraph in the sequence and express the adjacency matrix $A_2$ of the second digraph in terms of $A_1$ in a specific form. If this can be done, then the other digraphs in the sequence can be found using the recurrence proved in the paper. Using the described method, 11~sequences of directed strongly regular digraphs were constructed; for at least five parameter sets, the corresponding directed strongly regular digraphs were found for the first time.

The authors see promising directions for further research in determining the conditions that the digraph $G_1$ must satisfy for the method presented in the paper to work, and in finding a way to express the matrix $A_2$ in terms of the matrix $A_1$ without using a computer program.

\bibliographystyle{unsrtnat}


\end{document}